\newtheorem{theorem}{Theorem}[section]
\newtheorem{lemma}[theorem]{Lemma}
\newtheorem{proposition}[theorem]{Proposition}
\newtheorem{corollary}[theorem]{Corollary}
\theoremstyle{definition}
\theoremstyle{remark}
\newtheorem{remark}[theorem]{Remark}
\numberwithin{equation}{section}
\begin{document}

\title [Unitarily invariant norm inequalities]{Unitarily invariant norm inequalities involving $G_1$ operators}

\author[M. Bakherad]{ Mojtaba Bakherad}

\address{ Department of Mathematics, Faculty of Mathematics, University of Sistan and Baluchestan, Zahedan, I.R.Iran.}

\email{mojtaba.bakherad@yahoo.com; bakherad@member.ams.org}

\subjclass[2010]{Primary 15A60,  Secondary  30E20, 47A30, 47B10, 47B15.}

\keywords{$G_1$ operator; unitarily invariant norm; commutator operator; the Hilbert-Schmidt; analytic function.}
\begin{abstract}
In this paper, we present some upper bounds for unitarily invariant norms inequalities.
Among other inequalities, we show some upper bounds for the Hilbert-Schmidt norm. In particular, we prove
\begin{align*}
\|f(A)Xg(B)\pm g(B)Xf(A)\|_2\leq \left\|\frac{(I+|A|)X(I+|B|)+(I+|B|)X(I+|A|)}{d_Ad_B}\right\|_2,
\end{align*}
where $A, B, X\in\mathbb{M}_n$ such that $A$, $B$ are Hermitian with $\sigma (A)\cup\sigma(B)\subset\mathbb{D}$ and $f, g$ are  analytic on the
complex unit disk $\mathbb{{D}}$, $g(0)=f(0)=1$,  $\textrm{Re}(f)>0$ and $\textrm{Re}(g)>0$.
\end{abstract} \maketitle
\section{Introduction}
 Let  ${\mathbb B}(\mathscr H)$ be the $C^{*}$-algebra of all bounded linear operators on a separable complex  Hilbert space ${\mathscr H}$ with the identity $I$. In the case when ${\textrm dim}{\mathscr H }=n$, we determine ${\mathbb B}({\mathscr H})$ by the matrix
algebra $\mathbb{M}_n$ of all $n\times n$ matrices having associated with entries in the complex field.
If $z\in\mathbb{C}$, then we write $z$ instead of $zI$. For any operator $A$ in the algebra ${\mathbb K}({\mathscr H})$ of all compact
operators, we denote by $\left\{s_j(A)\right\}$ the sequence of singular values of $A$, i.e.
the eigenvalues $\lambda_j(|A|)$, where $|A|=(A^*A)^{1\over2}$, enumerated as
$s_1(A)\geq s_2(A)\geq\cdots$ in decreasing order and repeated
according to multiplicity. If the rank $A$ is $n$, we put $s_k(A) = 0$ for any $k > n$. Note that $s_j(X) = s_j(X^*) = s_j(|X|)$
and $s_j(AXB)\leq \|A\|\|B\|s_j(X)$$\,(j=1, 2,\cdots)$ for all $A, B\in{\mathbb B}({\mathscr H})$ and all $X\in{\mathbb K}({\mathscr H})$.

A unitarily invariant norm is a map $|||\,\cdot\,||| :{\mathbb K}({\mathscr H})\longrightarrow[0,\infty]$ given by
$|||A||| =g(s_1(A), s_2(A), \cdots )$, where $g$ is a symmetric norming function. The set
$\mathcal{C}_{|||\,\cdot\,|||}$ including $\left\{A \in{\mathbb K}({\mathscr H}) : |||A||| < \infty\right\}$
is a closed self-adjoint ideal $\mathcal{J}$ of ${\mathbb B}({\mathscr H})$ containing finite rank operators. It enjoys the property \cite{bha}:
\begin{align}\label{kho}
|||AXB|||\leq \|A\|\|B\||||X|||
\end{align}
for $A, B\in{\mathbb B}({\mathscr H})$ and $X\in\mathcal{J}$. Inequality \eqref{kho} implies that $|||UXV|||=|||X|||$, where
$U$ and $V$ are arbitrary unitaries in ${\mathbb B}({\mathscr H})$ and $X\in\mathcal{J}$. In addition, employing the polar decomposition of
$X =W|X|$ with $W$ a partial isometry and \eqref{kho}, we have
$|||X||| = |||\, |X|\, |||$.
An operator $A\in{\mathbb K}({\mathscr H})$ is called Hilbert-Schmidt if $\|A\|_2=\left(\sum_{j=1}^{\infty}s_j^2(A)\right)^{1/2}<\infty$.
 The Hilbert-Schmidt norm is a unitarily invariant norm. For $A=[a_{ij}]\in\mathbb{M}_n$, it holds that $\|A\|_2=\Big{(}\sum_{i,j=1}^n|a_{i,j}|^2\Big{)}^{1/2}$.
We use the notation $A\oplus B$ for the diagonal block matrix $\textrm{diag}(A,B)$. Its
singular values are $s_1(A), s_1(B), s_2(A), s_2(B), \cdots$. It is evident that
$$\left|\left|\left|\left[\begin{array}{cc}
 0&A\\
 B&0
 \end{array}\right]\right|\right|\right|=\left|\left|\left|\,|A|\oplus|B|\,\right|\right|\right|=\left|\left|\left|A\oplus B\right|\right|\right|,$$
\begin{align*}
||A\oplus B|| = \max\{\|A\|,\|B\|\}\quad\textrm {and}\quad||A\oplus B||_2=\left(\|A\|_2^2+\|B\|_2^2\right)^{\frac{1}{2}}.
\end{align*}
 The inequalities involving unitarily invariant norms have been of  special interest; see e.g., \cite{bakh1,bakhmos,bakhkit, 1010} and references therein.\\

An operator $A\in{\mathbb B}({\mathscr H})$ is called $G_1$ operator if the growth condition
\begin{align*}
\|(z-A)^{-1}\|={1\over \textrm{dist}(z, \sigma(A))}
\end{align*}
holds for all $z$ not in the spectrum $\sigma(A)$ of $A$, where $\textrm{dist}(z, \sigma(A))$ denotes the
distance between $z$ and $\sigma(A)$. It is known that normal (more generally, hyponormal) operators
are $G_1$ operators (see e.g., \cite{put}).
Let $A\in{\mathbb B}({\mathscr H})$ and $f$ be a function which is analytic on an open neighborhood
$\Omega$ of $\sigma(A)$ in the complex plane. Then $f(A)$ denotes the operator defined
on ${\mathscr H}$ by the Riesz-Dunford integral as
\begin{align*}
f(A)={1\over 2\pi i}\int_Cf(z)(z-A)^{-1}dz,
\end{align*}
where $C$ is a positively oriented simple closed rectifiable contour surrounding
$\sigma(A)$ in $\Omega$ (see e.g., \cite[p. 568]{du}). The spectral mapping theorem asserts that
$\sigma(f(A)) = f(\sigma(A))$. Throughout this note, $\mathbb{D} = \left\{z \in \mathbb{C} : |z| < 1\right\}$ denotes the
unit disk, $\partial\mathbb{{D}}$ stands for the boundary of $\mathbb{{D}}$ and $d_A = \textrm{dist}(\partial\mathbb{{D}}, \sigma(A))$. In addition,
we denote
\begin{align*}
\mathfrak{A}=\left\{f:\mathbb{{D}}\rightarrow \mathbb{C}: f\,\textrm{is analytic}, \textrm{Re}(f)>0\, \textrm{and}\,f(0)=1\right\}.
\end{align*}
The Sylvester type equations $AXB\pm X = C$ have been investigated in matrix theory; see \cite{BL}.
 Several perturbation bounds for the norm of sum or difference of operators
have been presented in the literature by employing some integral representations
of certain functions; see \cite{hirz, kit2, sed} and references therein. \\

In the recent paper \cite{kit2}, Kittaneh  showed that the following inequality involving $f\in\mathfrak{A}$
\begin{align*}
|||f(A)X- Xf(B)|||\leq\frac{2}{d_Ad_B}|||AX-XB|||,
\end{align*}
where  $A, B, X\in{\mathbb B}({\mathscr H})$ such that $A$ and $B$ are $G_1$ operators with $\sigma (A)\cup\sigma(B)\subset\mathbb{D}$. In \cite{mosl}, the authors
extended this inequality for two functions $f, g\in\mathfrak{A}$ as follows
\begin{align}\label{mos-kit1}
|||f(A)X-Xg(B)|||\leq\frac{2\sqrt2}{d_Ad_B}|||\,|AX|+|XB|\,|||
\end{align}
and
\begin{align}\label{mos-kit2}
|||f(A)X+ Xg(B)|||\leq\frac{2\sqrt2}{d_Ad_B}|||\,|AXB|+|X|\,|||,
\end{align}
in which $A, B, X\in{\mathbb B}({\mathscr H})$ such that $A$ and $B$ are $G_1$ operators with $\sigma (A)\cup\sigma(B)\subset\mathbb{D}$. They also showed that
\begin{align}\label{kit3}
|||f(A)Xg(B)-X|||\leq\frac{2\sqrt2}{d_Ad_B}|||\,|AX|+|XB|\,|||
\end{align}
and
\begin{align}\label{kit444}
|||f(A)Xg(B)+X|||\leq\frac{2\sqrt2}{d_Ad_B}|||\,|AXB|+|X|\,|||,
\end{align}
where  $A, B, X\in{\mathbb B}({\mathscr H})$ such that $A$ and $B$ are $G_1$ operators with $\sigma (A)\cup\sigma(B)\subset\mathbb{D}$.

In this paper, by using some ideas from \cite{kit2, mosl} we present some upper bounds for unitarily invariant norms of the forms $|||f(A)X+ X\bar{f}(A)|||$
and $|||f(A)X- X\bar{f}(A)|||$ involving $G_1$ operator  and $f\in\mathfrak{A}$. We also present  the Hilbert-Schmidt norm inequality of the form
\begin{align*}
\|f(A)Xg(B)\pm g(B)Xf(A)\|_2\leq \left\|\frac{(I+|A|)X(I+|B|)+(I+|B|)X(I+|A|)}{d_Ad_B}\right\|_2,
\end{align*}
where  $A, B, X\in\mathbb{M}_n$ such that $A$ and $B$ are Hermitian matrices with $\sigma (A)\cup\sigma(B)\subset\mathbb{D}$ and $f, g\in\mathfrak{A}$.


\section{main results}
Our first result is some upper bounds for the Hilbert-Schmidt norm inequalities.
\begin{theorem}\label{12}
Let $A, B\in\mathbb{M}_n$ be Hermitian matrices with $\sigma (A)\cup\sigma(B)\subset\mathbb{D}$ and $f, g\in\mathfrak{A}$. Then
\begin{align*}
\|f(A)X+Xg(B)&\pm f(A)Xg(B)\|_2\\&\leq \left\|\frac{X+|A|X}{d_A}+\frac{X+X|B|}{d_B}+\frac{\left(I+|A|\right)X\left(I+|B|\right)}{d_Ad_B}\right\|_2
\end{align*}
and
\begin{align*}
\|f(A)Xg(B)\pm g(B)Xf(A)\|_2\leq \left\|\frac{(I+|A|)X(I+|B|)+(I+|B|)X(I+|A|)}{d_Ad_B}\right\|_2,
\end{align*}
where  $X \in\mathbb{M}_n$.
\end{theorem}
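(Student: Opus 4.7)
The plan is to combine the spectral decompositions of the Hermitian matrices $A$ and $B$ with a Carath\'eodory--Schwarz bound for the class $\mathfrak{A}$. The key preliminary estimate is that for every $f\in\mathfrak{A}$ and $z\in\mathbb{D}$, $|f(z)|\leq (1+|z|)/(1-|z|)$; this follows by applying Schwarz's lemma to $\phi(z)=(f(z)-1)/(f(z)+1)$, which is analytic on $\mathbb{D}$, vanishes at $0$ (because $f(0)=1$), and maps $\mathbb{D}$ into itself (because $\operatorname{Re}(f)>0$ implies $|f-1|<|f+1|$). Since $d_A\leq 1-|\lambda|$ for every $\lambda\in\sigma(A)\subset\mathbb{D}$, this yields $|f(\lambda)|\,d_A\leq 1+|\lambda|$ on $\sigma(A)$, and symmetrically $|g(\mu)|\,d_B\leq 1+|\mu|$ on $\sigma(B)$.

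Writing the spectral decompositions $A=\sum_i\lambda_i E_i$ and $B=\sum_j\mu_j F_j$, both sides of the first inequality are linear combinations of $\{E_iXF_j\}_{i,j}$. Because $E_iE_{i'}=\delta_{ii'}E_i$ and $F_jF_{j'}=\delta_{jj'}F_j$, this family is orthogonal in the Hilbert--Schmidt inner product, so the squared HS-norm of each side is the corresponding weighted sum of $\|E_iXF_j\|_2^2$, and the inequality reduces to the scalar termwise estimate
\[
|f(\lambda_i)+g(\mu_j)\pm f(\lambda_i)g(\mu_j)|\leq \frac{1+|\lambda_i|}{d_A}+\frac{1+|\mu_j|}{d_B}+\frac{(1+|\lambda_i|)(1+|\mu_j|)}{d_Ad_B},
\]
which is immediate from the preliminary estimate and the triangle inequality.

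The second inequality similarly expands as $f(A)Xg(B)\pm g(B)Xf(A)=\sum_{i,j}f(\lambda_i)g(\mu_j)\bigl(E_iXF_j\pm F_jXE_i\bigr)$, while the RHS numerator becomes $\sum_{i,j}(1+|\lambda_i|)(1+|\mu_j|)(E_iXF_j+F_jXE_i)$, and the coefficient bound $|f(\lambda_i)g(\mu_j)|\leq(1+|\lambda_i|)(1+|\mu_j|)/(d_Ad_B)$ again holds. The main obstacle here, by contrast with the first inequality, is that the paired summands $E_iXF_j\pm F_jXE_i$ are \emph{not} mutually HS-orthogonal, so termwise square-sum comparison is no longer available. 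I would circumvent this via the polar decompositions $f(A)=U_A|f(A)|$ and $g(B)=U_B|g(B)|$---with $U_A,U_B$ unitary because $f(A),g(B)$ are normal---together with the operator inequalities $|f(A)|\leq (I+|A|)/d_A$ and $|g(B)|\leq (I+|B|)/d_B$, obtained by promoting the scalar estimate through spectral calculus. Factoring each of $d_Ad_Bf(A)Xg(B)$ and $d_Ad_Bg(B)Xf(A)$ as a contraction applied on each side to the corresponding term $(I+|A|)X(I+|B|)$ or $(I+|B|)X(I+|A|)$---exploiting that the contraction factors are themselves functions of $A$ or $B$ and hence commute with the majorants $(I+|A|)$ or $(I+|B|)$---and invoking the HS-contractivity $\|CM\|_2,\|MC\|_2\leq\|M\|_2$ for contractions $C$, then delivers the stated bound on the full sum.
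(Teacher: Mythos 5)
Your treatment of the first inequality is correct and is essentially the paper's own argument: the paper likewise diagonalizes $A=U\Lambda U^{*}$ and $B=V\Gamma V^{*}$, reduces both sides to the entries of $U^{*}XV$ (equivalently, to the mutually Hilbert--Schmidt-orthogonal blocks $E_iXF_j$), and applies the scalar estimate $|f(\lambda)|\leq (1+|\lambda|)/d_A$ termwise. The only difference is the source of that scalar estimate: you obtain it from the Schwarz lemma applied to $(f-1)/(f+1)$, while the paper uses the Herglotz representation $f(z)=\int_0^{2\pi}\frac{e^{i\alpha}+z}{e^{i\alpha}-z}\,d\mu(\alpha)$ with $\mu$ a probability measure. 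Both derivations are valid and interchangeable here.

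For the second inequality your argument has a genuine gap at the final step. The polar-decomposition/contraction device does legitimately give the two separate bounds $d_Ad_B\|f(A)Xg(B)\|_2\leq\|(I+|A|)X(I+|B|)\|_2$ and $d_Ad_B\|g(B)Xf(A)\|_2\leq\|(I+|B|)X(I+|A|)\|_2$, since in each product every factor on a given side is a function of the same matrix. But the contractions attached to the two summands are \emph{different} operators --- for the first summand the left factor is a function of $A$ and the right factor a function of $B$, and for the second summand the roles are reversed --- so no single contraction can be factored out of the sum. What your construction actually yields is
\[
\|f(A)Xg(B)\pm g(B)Xf(A)\|_2\leq\frac{\|(I+|A|)X(I+|B|)\|_2+\|(I+|B|)X(I+|A|)\|_2}{d_Ad_B},
\]
whereas the theorem asserts a bound by the Hilbert--Schmidt norm of the \emph{sum} $(I+|A|)X(I+|B|)+(I+|B|)X(I+|A|)$, which by the triangle inequality is smaller in general; passing from the sum of the norms to the norm of the sum would require a reverse triangle inequality that fails here. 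So the proposal only establishes a strictly weaker statement. For comparison, the paper attacks this case entirely at the level of matrix entries, bounding $|f(\lambda_j)g(\gamma_k)\pm g(\gamma_j)f(\lambda_k)|$ by $\big[(1+|\lambda_j|)(1+|\gamma_k|)+(1+|\gamma_j|)(1+|\lambda_k|)\big]/(d_Ad_B)$ termwise --- though that computation tacitly writes $g(B)Xf(A)$ and $(I+|B|)X(I+|A|)$ in the same pair of bases used for $f(A)Xg(B)$, which is only transparent when $A$ and $B$ are simultaneously diagonalizable; the non-orthogonality obstacle you identified is therefore real, and your workaround does not close it.
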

\begin{proof}
Let $A = U\Lambda U^*$ and $B = V\Gamma V^*$ be the spectral decomposition of $A$ and $B$ such that $\Lambda=\textrm{diag}(\lambda_1,\cdots, \lambda_n)$,
$\Gamma=\textrm{diag}(\gamma_1,\cdots, \gamma_n)$ and let $U^*XV := [y_{jk}]$. It follows from
 $|e^{i \alpha}-\lambda_j|\geq d_A$ and $|e^{i \beta}-\gamma_k|\geq d_B$ that
 \begin{align*}
&\|f(A)X+Xg(B)\pm f(A)Xg(B)\|_2^2\\&=\sum_{j,k}|f(\lambda_j)y_{j,k}+y_{j,k}g(\gamma_k)\pm f(\lambda_j)y_{j,k}g(\gamma_k)|^2
\\&=\sum_{j,k}|f(\lambda_j)\pm f(\lambda_j)g(\gamma_k)+g(\gamma_k)|^2|y_{j,k}|^2
\\&=\sum_{j,k}\left|\int_0^{2\pi}\int_0^{2\pi}\frac{e^{i\alpha}+\lambda_j}{e^{i\alpha}-\lambda_j}
+\frac{e^{i\beta}+\gamma_k}{e^{i\beta}-\gamma_k}\pm\frac{(e^{i\alpha}+\lambda_j)(e^{i\beta}+\gamma_k)}{(e^{i\alpha}-\lambda_j)(e^{i\beta}-\gamma_k)}
d\mu(\alpha)d\mu(\beta)\right|^2|y_{j,k}|^2
\\&\leq\sum_{j,k}\left(\int_0^{2\pi}\int_0^{2\pi}\frac{|e^{i\alpha}+\lambda_j|}{|e^{i\alpha}-\lambda_j|}
+\frac{|e^{i\beta}+\gamma_k|}{|e^{i\beta}-\gamma_k|}+\frac{|e^{i\alpha}+\lambda_j||e^{i\beta}+\gamma_k|}{|e^{i\alpha}-\lambda_j||e^{i\beta}-\gamma_k|}
d\mu(\alpha)d\mu(\beta)\right)^2|y_{j,k}|^2
\\&\leq\sum_{j,k}\left(\int_0^{2\pi}\int_0^{2\pi}\frac{1+|\lambda_j|}{d_A}
+\frac{(1+|\lambda_j|)(1+|\gamma_k|)}{d_Ad_B}
+\frac{1+|\gamma_k|}{d_B}d\mu(\alpha)d\mu(\beta)\right)^2|y_{j,k}|^2
\\&\leq\sum_{j,k}\left(\frac{1+|\lambda_j|}{d_A}
+\frac{1+|\gamma_k|}{d_B}+\frac{(1+|\lambda_j|)(1+|\gamma_k|)}{d_Ad_B}
\right)^2|y_{j,k}|^2
\\&=\left\|\frac{X+|A|X}{d_A}+\frac{X+X|B|}{d_B}+\frac{\left(I+|A|\right)X\left(I+|B|\right)}{d_Ad_B}\right\|_2^2.
\end{align*}
Then we get the first inequality. Similarly,
\begin{align*}
&\|f(A)Xg(B)\pm g(B)Xf(A)\|_2^2\\&=\sum_{j,k}|f(\lambda_j)y_{j,k}g(\gamma_k)\pm g(\gamma_j)y_{j,k}f(\lambda_k) |^2
\\&=\sum_{j,k}|f(\lambda_j)g(\gamma_k)\pm g(\gamma_j)f(\lambda_k)|^2|y_{j,k}|^2
\\&=\sum_{j,k}\left|\int_0^{2\pi}\int_0^{2\pi}
\frac{(e^{i\alpha}+\lambda_j)(e^{i\beta}+\gamma_k)}{(e^{i\alpha}-\lambda_j)(e^{i\beta}-\gamma_k)}\pm \frac{(e^{i\beta}+\gamma_j)(e^{i\alpha}+\lambda_k)}{(e^{i\beta}-\gamma_j)(e^{i\alpha}-\lambda_k)}
d\mu(\alpha)d\mu(\beta)\right|^2|y_{j,k}|^2
\\&\leq\sum_{j,k}\left(\int_0^{2\pi}\int_0^{2\pi}
\frac{|e^{i\alpha}+\lambda_j||e^{i\beta}+\gamma_k|}{|e^{i\alpha}-\lambda_j||e^{i\beta}-\gamma_k|}+ \frac{|e^{i\beta}+\gamma_j||e^{i\alpha}+\lambda_k|}{|e^{i\beta}-\gamma_j||e^{i\alpha}-\lambda_k|}
d\mu(\alpha)d\mu(\beta)\right)^2|y_{j,k}|^2
\\&\leq\sum_{j,k}\left(\int_0^{2\pi}\int_0^{2\pi}
\frac{(1+|\lambda_j|)(1+|\gamma_k|)}{d_Ad_B}+ \frac{(1+|\gamma_j|)(1+|\lambda_k|)}{d_Ad_B}
d\mu(\alpha)d\mu(\beta)\right)^2|y_{j,k}|^2
\\&\leq\sum_{j,k}\left(
\frac{(1+|\lambda_j|)(1+|\gamma_k|)}{d_Ad_B}+ \frac{(1+|\gamma_j|)(1+|\lambda_k|)}{d_Ad_B}
\right)^2|y_{j,k}|^2\\&\leq\sum_{j,k}\left(
\frac{(1+|\lambda_j|)y_{j,k}(1+|\gamma_k|)}{d_Ad_B}+ \frac{(1+|\gamma_j|)y_{j,k}(1+|\lambda_k|)}{d_Ad_B}
\right)^2
\\&=\left\|\frac{(I+|A|)X(I+|B|)+(I+|B|)X(I+|A|)}{d_Ad_B}\right\|_2.
\end{align*}
\end{proof}
Now, if we put $X=I$ in Theorem \ref{12}, then we get the next result.
\begin{corollary}\label{123}
Let $A, B\in\mathbb{M}_n$ be Hermitian matrices with $\sigma (A)\cup\sigma(B)\subset\mathbb{D}$ and $f, g\in\mathfrak{A}$. Then
\begin{align*}
\|f(A)+g(B)\pm f(A)g(B)\|_2\leq \left\|\frac{I+|A|}{d_A}+\frac{I+|B|}{d_B}+\frac{\left(I+|A|\right)\left(I+|B|\right)}{d_Ad_B}\right\|_2
\end{align*}
and
\begin{align*}
\|f(A)g(B)\pm g(B)f(A)\|_2\leq \left\|\frac{(I+|A|)(I+|B|)+(I+|B|)(I+|A|)}{d_Ad_B}\right\|_2.
\end{align*}
\end{corollary}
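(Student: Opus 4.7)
The plan is straightforward: the corollary is the specialization $X = I$ of Theorem \ref{12}, so it follows directly by substitution into the two inequalities already established. No independent argument is needed; the only task is to check that each term on the right-hand side of Theorem \ref{12} collapses to the stated expression when $X$ is replaced by the identity.

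First I would apply the first inequality of Theorem \ref{12} with $X = I$. Since $I + |A|I = I + |A|$, $I + I|B| = I + |B|$, and $(I+|A|)I(I+|B|) = (I+|A|)(I+|B|)$, the right-hand side becomes
\begin{align*}
\left\|\frac{I + |A|}{d_A} + \frac{I + |B|}{d_B} + \frac{(I+|A|)(I+|B|)}{d_A d_B}\right\|_2,
\end{align*}
while the left-hand side is $\|f(A) + g(B) \pm f(A)g(B)\|_2$, yielding the first claim.

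Next I would apply the second inequality of Theorem \ref{12} with $X = I$. The substitution gives $(I+|A|)I(I+|B|) = (I+|A|)(I+|B|)$ and $(I+|B|)I(I+|A|) = (I+|B|)(I+|A|)$, so the right-hand side becomes
\begin{align*}
\left\|\frac{(I+|A|)(I+|B|) + (I+|B|)(I+|A|)}{d_A d_B}\right\|_2,
\end{align*}
and the left-hand side is $\|f(A)g(B) \pm g(B)f(A)\|_2$, giving the second claim.

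There is no real obstacle here; the only thing worth a moment's attention is that $X = I$ is a legitimate element of $\mathbb{M}_n$ so Theorem \ref{12} genuinely applies. Everything else is a line-by-line substitution, and no new estimate or tool is required.
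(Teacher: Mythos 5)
Your proposal is correct and matches the paper exactly: the paper obtains Corollary \ref{123} precisely by setting $X=I$ in Theorem \ref{12}, with the same collapse of the right-hand side terms that you describe. No further comment is needed.
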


To prove the next results, the following lemma is required.
\begin{lemma}\label{man}
Let $A, B, X, Y\in{\mathbb B}({\mathscr H})$ such that $X$ and $Y$ are compact. Then\\

 $(a)\,\,s_j(AX\pm YB)\leq 2\sqrt{\|A\|\|B\|}s_j(X\oplus Y)\,\,(j=1,2,\cdots);$
\\

 $(b)\,\,|||(AX\pm YB)\oplus0|||\leq 2\sqrt{\|A\|\|B\|}\,|||X\oplus Y|||$.
\\
\end{lemma}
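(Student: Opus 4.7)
The plan is to establish (a) first; part (b) will then follow automatically, since any unitarily invariant norm is a symmetric gauge function of the singular-value sequence, and $(AX\pm YB)\oplus 0$ and $AX\pm YB$ share the same non-zero singular values. So any pointwise inequality $s_j(AX\pm YB)\leq c\,s_j(X\oplus Y)$ passes directly to the inequality in (b) with the same constant $c$.

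For (a), I would rely on the $2\times 2$ block-matrix identity
\begin{align*}
\begin{bmatrix} AX\pm YB & 0 \\ 0 & 0 \end{bmatrix} = \begin{bmatrix} A & \pm I \end{bmatrix} (X\oplus Y) \begin{bmatrix} I \\ B \end{bmatrix},
\end{align*}
which is verified by direct block multiplication. Because the left-hand side has the same non-zero singular values as $AX\pm YB$, the standard singular-value inequality $s_j(PRQ)\leq \|P\|\,\|Q\|\,s_j(R)$ applied with $R = X\oplus Y$ yields $s_j(AX\pm YB)\leq \|P\|\,\|Q\|\,s_j(X\oplus Y)$.

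To manufacture the geometric-mean constant $2\sqrt{\|A\|\|B\|}$, I would replace the identity above by the one-parameter family
\begin{align*}
AX\pm YB = \begin{bmatrix} \lambda A & \pm \lambda^{-1} I \end{bmatrix} (X\oplus Y) \begin{bmatrix} \lambda^{-1} I \\ \lambda B \end{bmatrix} \qquad (\lambda>0),
\end{align*}
bound $\bigl\|\begin{bmatrix} \lambda A & \pm \lambda^{-1} I \end{bmatrix}\bigr\|^2 \leq \lambda^{2}\|A\|^{2}+\lambda^{-2}$ together with the analogous estimate $\bigl\|\begin{bmatrix} \lambda^{-1} I \\ \lambda B \end{bmatrix}\bigr\|^2 \leq \lambda^{-2}+\lambda^{2}\|B\|^{2}$ on the right-hand factor, and finally minimize the resulting product over $\lambda>0$ by the AM--GM inequality.

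The hard part lies precisely in this optimization. A direct AM--GM applied to the factorization above yields $(\|A\|+\|B\|)^2$ as the minimum of $(\lambda^{2}\|A\|^{2}+\lambda^{-2})(\lambda^{-2}+\lambda^{2}\|B\|^{2})$, hence only the weaker constant $\|A\|+\|B\|$. Reaching the sharper geometric-mean constant $2\sqrt{\|A\|\|B\|}$ therefore calls for a more delicate factorization---for instance, one that distributes $A$ and $B$ via their polar decompositions $A=U_A|A|$, $B=|B|U_B$ so as to balance spectral content symmetrically across the two outer factors---or an additional Cauchy--Schwarz step chained onto the AM--GM estimate. Once (a) is in hand with the stated constant, (b) is immediate from the gauge-function characterisation of unitarily invariant norms described at the outset.
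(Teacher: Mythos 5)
Your argument, as you candidly note, only establishes the inequality with the constant $\|A\|+\|B\|$ and leaves open the step that would improve it to $2\sqrt{\|A\|\|B\|}$. That is a genuine gap, and it is in fact unfillable: the lemma as stated is false. Take $B=0$, $Y=0$, $A=I$ and any nonzero compact $X$; then $AX\pm YB=X$ has $s_1(X)>0$, while $2\sqrt{\|A\|\|B\|}\,s_1(X\oplus Y)=0$. Since $2\sqrt{\|A\|\|B\|}\leq\|A\|+\|B\|$ always, the claimed bound is strictly stronger than the Hirzallah--Kittaneh inequality $s_j(AX\pm YB)\leq(\|A\|+\|B\|)\,s_j(X\oplus Y)$, and no refinement of the factorization, polar decomposition, or Cauchy--Schwarz step can recover it. So the obstruction you ran into---the optimization bottoming out at $(\|A\|+\|B\|)^2$---is real and not a failure of technique.

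It is worth recording that the paper's own proof founders on exactly the scaling idea you flirt with. Starting from $s_j(AX\pm YB)\leq(\|A\|+\|B\|)\,s_j(X\oplus Y)$, the paper substitutes $A\mapsto tA$, $B\mapsto B/t$, $X\mapsto X/t$, $Y\mapsto tY$. This leaves $AX\pm YB$ unchanged and turns the scalar factor into $t\|A\|+\|B\|/t$, but it also replaces $s_j(X\oplus Y)$ by $s_j\bigl((X/t)\oplus(tY)\bigr)$, which is not $s_j(X\oplus Y)$ for $t\neq1$; the paper silently keeps $s_j(X\oplus Y)$ and then minimizes over $t$, which is precisely the illegitimate step. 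Your reduction of (b) to (a) via the equality of nonzero singular values and the symmetric gauge function (Ky Fan dominance) argument is correct and matches the paper. If the lemma is restated with the constant $\|A\|+\|B\|$, then your one-parameter block factorization $AX\pm YB=\bigl[\lambda A\ \ \pm\lambda^{-1}I\bigr](X\oplus Y)\bigl[\lambda^{-1}I\ \ \lambda B\bigr]^{T}$ together with the optimization you carried out gives a complete, self-contained proof that does not even need to cite \cite{hirz}.
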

\begin{proof}
Using  \cite[Theorem 2.2]{hirz} we have
\begin{align*}
s_j(AX\pm YB)\leq (\|A\|+\|B\|)s_j(X\oplus Y)\,\,(j=1,2,\cdots).
\end{align*}
If we replace $A$, $B$, $X$ and $Y$ by $tA$, $\frac{B}{t}$, $\frac{X}{t}$ and $tY$, respectively, then we get
\begin{align*}
s_j(AX\pm YB)\leq (t\|A\|+\frac{\|B\|}{t})s_j(X\oplus Y)\,\,(j=1,2,\cdots).
\end{align*}
It follows from $\min_{t>0}(t\|A\|+\frac{\|B\|}{t})=2\sqrt{\|A\|\|B\|}$ that we reach the first inequality. The second inequality
can be proven by the first inequality and the Ky Fan dominance theorem \cite[Theorme IV.2.2]{bha}; see also \cite{asad}.
\end{proof}
Now, by applying Lemma \ref{man} we obtain the following result.
\begin{theorem}\label{+1}
Let $A, B, X, Y\in{\mathbb B}({\mathscr H})$  and $f, g\in\mathfrak{A}$. Then
{\footnotesize\begin{align*}
\left|\left|\left|\big((f(A)-g(B))X\pm Y(f(B)-g(A))\big)\oplus0\right|\right|\right|\leq {4\sqrt{2}\over d_Ad_B}\,\||A|+|B|\|\left|\left|\left|X\oplus Y\right|\right|\right|
\end{align*}}
and
{\footnotesize\begin{align*}
\left|\left|\left|\big((f(A)+g(B))X\pm Y(f(B)+g(A))\big)\oplus0\right|\right|\right|\leq {4\sqrt{2}\over d_Ad_B}\,\|I+|AB|\|\left|\left|\left|X\oplus Y\right|\right|\right|,
\end{align*}}
where  $X, Y$ are  compact and $A,B$ are $G_1$ operators with $\sigma (A)\cup\sigma(B)\subset\mathbb{D}$.
\end{theorem}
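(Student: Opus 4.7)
The plan is to combine the Herglotz--Riesz integral representation for functions in $\mathfrak{A}$ with the resolvent factorization technique used in the proofs of \eqref{mos-kit1}--\eqref{kit444} and the singular value inequality of Lemma~\ref{man}(b).

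Every $f\in\mathfrak{A}$ corresponds to a probability measure $\mu_f$ on $[0,2\pi]$ with $f(z)=\int_0^{2\pi}\phi_\alpha(z)\,d\mu_f(\alpha)$, where $\phi_\alpha(z):=(e^{i\alpha}+z)(e^{i\alpha}-z)^{-1}$, and the Riesz--Dunford calculus lifts this to the operator identity $f(A)=\int_0^{2\pi}\phi_\alpha(A)\,d\mu_f(\alpha)$ when $A$ is $G_1$ with $\sigma(A)\subset\mathbb D$. Doing the same for $f(B),g(A),g(B)$ and regrouping the four resulting pieces of $(f(A)-g(B))X\pm Y(f(B)-g(A))$ by the measure each carries produces
\begin{align*}
(f(A)-g(B))X\pm Y(f(B)-g(A))=\int\!\!\int\Big\{\bigl[\phi_\alpha(A)X\pm Y\phi_\alpha(B)\bigr]-\bigl[\phi_\beta(B)X\pm Y\phi_\beta(A)\bigr]\Big\}d\mu_f(\alpha)d\mu_g(\beta).
\end{align*}

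Next, I would factor a resolvent onto each side of each inner bracket. With $R_\alpha(T):=(e^{i\alpha}-T)^{-1}$, the identities $\phi_\alpha(T)=R_\alpha(T)(e^{i\alpha}+T)=(e^{i\alpha}+T)R_\alpha(T)$ and $R_\alpha(T)(e^{i\alpha}-T)=I$ yield
\begin{align*}
\phi_\alpha(A)X\pm Y\phi_\alpha(B)=R_\alpha(A)\Big\{(e^{i\alpha}+A)X(e^{i\alpha}-B)\pm(e^{i\alpha}-A)Y(e^{i\alpha}+B)\Big\}R_\alpha(B),
\end{align*}
with a similar factorization for the $\beta$-bracket (with the roles of $A$ and $B$ swapped). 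The $G_1$ growth condition and the ideal property $|||CZD|||\le\|C\|\|D\|\,|||Z|||$ then deposit the $(d_Ad_B)^{-1}$ denominator via $\|R_\alpha(A)\|\le d_A^{-1}$ and $\|R_\alpha(B)\|\le d_B^{-1}$.

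The heart of the argument is the estimate of $|||(\cdot)\oplus 0|||$ for the inner brace. I would apply Lemma~\ref{man}(b) keeping $X$ and $Y$ as the ``variables,'' using the block identity $XQ\oplus PY=(I\oplus P)(X\oplus Y)(Q\oplus I)$ to pull the auxiliary factors $(e^{i\alpha}\pm A),(e^{i\alpha}\pm B)$ out of the direct sum. Combining this with the bounds $\|e^{i\alpha}\pm T\|\le 1+\|T\|$, the AM--GM inequality $2\sqrt{ab}\le a+b$, and the triangle inequality $\||A|\|+\||B|\|\ge\||A|+|B|\|$ for the operator modulus, I expect a bound of $\frac{2\sqrt2\,\||A|+|B|\|}{d_Ad_B}|||X\oplus Y|||$ for each of the two inner brackets, which sum (via the triangle inequality for the unitarily invariant norm) to the claimed coefficient $4\sqrt{2}$. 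The second inequality of the theorem is handled identically; because the sign pattern there keeps the $I$- and $AXB$-type contributions grouped together rather than cancelling with the $A(X\pm Y)B$ ones, $\||A|+|B|\|$ is replaced by $\|I+|AB|\|$.

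The main obstacle will be the algebraic bookkeeping in the last step. The raw expansion of the inner brace yields the mixed terms $e^{2i\alpha}(X\pm Y)$, $e^{i\alpha}[A(X\mp Y)-(X\mp Y)B]$, and $A(X\pm Y)B$, and a careless application of Lemma~\ref{man}(b) delivers only the looser $(1+\|A\|)(1+\|B\|)$-type constants; arranging these monomials into two pieces each precisely of the form $PX\pm YQ$, with $\|P\|$ and $\|Q\|$ jointly controlled by $\||A|+|B|\|$ (respectively $\|I+|AB|\|$), is where the delicate work lies.
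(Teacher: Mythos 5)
Your proposal does not close, and the decisive step is both missing and, as you have set it up, unobtainable. The paper's proof operates at a different level: it applies Lemma~\ref{man}(b) directly with the operator coefficients $P=f(A)-g(B)$ and $Q=f(B)-g(A)$, so that $|||(PX\pm YQ)\oplus0|||\le 2\|P\|^{1/2}\|Q\|^{1/2}\,|||X\oplus Y|||$, and then bounds the two operator norms by invoking the already-established inequalities \eqref{mos-kit1} and \eqref{mos-kit2} with $X=I$ in the operator norm (itself a unitarily invariant norm), which give $\|f(A)-g(B)\|\le \tfrac{2\sqrt2}{d_Ad_B}\,\||A|+|B|\|$ and $\|f(A)+g(B)\|\le\tfrac{2\sqrt2}{d_Ad_B}\,\|I+|AB|\|$; multiplying and taking square roots yields the constant $4\sqrt2/(d_Ad_B)$ at once. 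You instead re-open the Herglotz representation and try to run Lemma~\ref{man}(b) on the monomials inside a double integral, and you yourself flag that you only ``expect'' the right constant and that the regrouping is ``where the delicate work lies.'' That is a plan, not a proof.

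Worse, the specific plan cannot succeed. You propose to bound the $\alpha$-bracket $\phi_\alpha(A)X\pm Y\phi_\alpha(B)$ and the $\beta$-bracket separately, each by $\tfrac{2\sqrt2}{d_Ad_B}\,\||A|+|B|\|\,|||X\oplus Y|||$. Take $A=B=0$ (normal, hence $G_1$, with $d_A=d_B=1$): then $\phi_\alpha(0)=I$, so each bracket equals $X\pm Y$, which is generally nonzero, while your proposed per-bracket bound is zero. The vanishing of the left-hand side in this degenerate case comes only from cancellation \emph{between} the $\alpha$- and $\beta$-brackets, which your term-by-term estimate discards; concretely, the monomial $e^{2i\alpha}(X\pm Y)$ in your expansion is not controlled by $\||A|+|B|\|$. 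Relatedly, your appeal to $\|A\|+\|B\|\ge\||A|+|B|\|$ points the wrong way: you need an upper bound by $\||A|+|B|\|$, and passing through $\|e^{i\alpha}\pm A\|\le 1+\|A\|$ forfeits it. If you insist on the integral route you must regroup so that the identity terms cancel inside each bracket, e.g.\ as $(\phi_\alpha(A)-\phi_\beta(B))X\pm Y(\phi_\alpha(B)-\phi_\beta(A))$, using $\phi_\alpha(A)-\phi_\beta(B)=2(e^{i\alpha}-A)^{-1}(e^{i\beta}A-e^{i\alpha}B)(e^{i\beta}-B)^{-1}$ --- but at that point you are simply re-proving \eqref{mos-kit1} for $X=I$, which the paper's argument uses as a black box.
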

\begin{proof}
Using Lemma \ref{man} and inequalities \eqref{mos-kit1} and \eqref{mos-kit2} we have
\begin{align*}
\big|\big|&\big|\big((f(A)-g(B))X\pm Y(f(B)-g(A))\big)\oplus0\big|\big|\big|\\&
\leq 2\|f(A)-g(B)\|^{1\over2}\|f(B)-g(A)\|^{1\over2}|||X\oplus Y|||
\,\,\,(\textrm{by Lemma \ref{man})}\\&
\leq 2\,\sqrt{{2\sqrt{2}\over d_Ad_B}\||A|+|B|\|}\sqrt{{2\sqrt{2}\over d_Ad_B}\||B|+|A|\|}\left|\left|\left|X\oplus Y\right|\right|\right|
\\&\qquad\qquad\qquad\qquad\qquad\qquad\qquad\qquad(\textrm{by inequality \eqref{mos-kit1})}\\&
={4\sqrt{2}\over d_Ad_B}\,\||A|+|B|\|\left|\left|\left|X\oplus Y\right|\right|\right|.
\end{align*}
Similarly,
\begin{align*}
\big|\big|&\big|\big((f(A)+g(B))X\pm Y(f(B)+g(A))\big)\oplus0\big|\big|\big|\\&
\leq 2\|f(A)+g(B)\|^{1\over2}\|f(B)+g(A)\|^{1\over2}|||X\oplus Y|||\,\,\,(\textrm{by Lemma \ref{man})}\\&
\leq 2\,\sqrt{{2\sqrt{2}\over d_Ad_B}\|I+|AB|\|}\sqrt{{2\sqrt{2}\over d_Ad_B}\|I+|AB|\|}\left|\left|\left|X\oplus Y\right|\right|\right|
\\&\qquad\qquad\qquad\qquad\qquad\qquad\qquad\qquad(\textrm{by inequalities \eqref{mos-kit2}})\\&
={4\sqrt{2}\over d_Ad_B}\,\|I+|AB|\|\left|\left|\left|X\oplus Y\right|\right|\right|.
\end{align*}
\end{proof}

\begin{theorem}\label{man1}
Let $A, B\in{\mathbb B}({\mathscr H})$ be $G_1$ operators with $\sigma (A)\cup\sigma (B)\subset\mathbb{D}$ and $f\in\mathfrak{A}$. Then for every $X \in{\mathbb B}({\mathscr H})$

\begin{align}\label{main2}
\left|\left|\left|f(A)X+X\bar{f}(B)\right|\right|\right|\leq{2\over d_Ad_B}\left|\left|\left|X-AXB^*\right|\right|\right|
\end{align}
and
\begin{align}\label{main444}
\left|\left|\left|f(A)X-X\bar{f}(B)\right|\right|\right|\leq{2\sqrt{2}\over d_Ad_B}\left|\left|\left|\,|AX|+|XB^\ast|\,\right|\right|\right|,
\end{align}
\end{theorem}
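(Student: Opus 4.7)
The plan is to extend the integral-representation technique of \cite{kit2,mosl} that produced (1.1)--(1.5) to the present situation where $\bar f(B)$ replaces the generic $g(B)$. Every $f\in\mathfrak A$ admits a Herglotz representation
\[
f(z)=\int_{0}^{2\pi}\frac{e^{i\alpha}+z}{e^{i\alpha}-z}\,d\mu(\alpha)
\]
for some probability measure $\mu$ on $[0,2\pi]$, and since $A$ is a $G_{1}$ operator with $\sigma(A)\subset\mathbb D$, the Riesz--Dunford calculus gives
\[
f(A)=\int_{0}^{2\pi}(e^{i\alpha}+A)(e^{i\alpha}-A)^{-1}\,d\mu(\alpha).
\]
Because the target bound features $B^{*}$, the decisive move is to write $\bar f(B)=f(B^{*})^{*}$ (valid since $\bar f(z)=\sum\bar a_{n}z^{n}$ when $f(z)=\sum a_{n}z^{n}$) and express it via the adjoint of the Herglotz integral of $f$ at the $G_{1}$ operator $B^{*}$:
\[
\bar f(B)=\left[\int_{0}^{2\pi}(e^{i\beta}+B^{*})(e^{i\beta}-B^{*})^{-1}\,d\mu(\beta)\right]^{*}.
\]

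Next I would combine the two representations into a double integral for $f(A)X\pm X\bar f(B)$. The underlying algebraic identity, obtained by a direct expansion, is
\[
(u+A)X(w-B^{*})\pm(u-A)X(w+B^{*})=
\begin{cases}
2(uwX-AXB^{*}), & \text{in the $+$ case,}\\
\pm\,2(wAX-uXB^{*}), & \text{in the $-$ case,}
\end{cases}
\]
valid for any scalars $u,w$. Sandwiching by $(u-A)^{-1}$ on the left and by the appropriate adjointed resolvent of $B^{*}$ on the right, and integrating against $d\mu(\alpha)\,d\mu(\beta)$, yields the representation
\[
f(A)X\pm X\bar f(B)=2\!\iint (e^{i\alpha}-A)^{-1}\,\mathcal K^{\pm}(\alpha,\beta)\,\bigl((e^{i\beta}-B^{*})^{-1}\bigr)^{*}\,d\mu(\alpha)\,d\mu(\beta),
\]
with kernels $\mathcal K^{+}(\alpha,\beta)=e^{i(\alpha-\beta)}X-AXB^{*}$ and $\mathcal K^{-}(\alpha,\beta)=\pm(e^{-i\beta}AX-e^{i\alpha}XB^{*})$.

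Finally I would bring the norm inside using Minkowski's inequality and apply the $G_{1}$ estimates $\|(e^{i\alpha}-A)^{-1}\|\leq 1/d_{A}$ and $\|((e^{i\beta}-B^{*})^{-1})^{*}\|=\|(e^{i\beta}-B^{*})^{-1}\|\leq 1/d_{B^{*}}=1/d_{B}$, producing the prefactor $2/(d_{A}d_{B})$. In the $+$ case, the unit-modulus phase $e^{i(\alpha-\beta)}$ is absorbed into the outer resolvents using unitary invariance and the normalisation $\int d\mu=1$, leaving $|||X-AXB^{*}|||$. In the $-$ case, I would apply Lemma~\ref{man}(b) with $AX$ and $XB^{*}$ in the $X,Y$ slots and absorb the phases $e^{-i\beta},e^{i\alpha}$ into the scalars to obtain
\[
|||e^{-i\beta}AX-e^{i\alpha}XB^{*}|||\leq\sqrt 2\,|||\,|AX|+|XB^{*}|\,|||,
\]
which, integrated against the probability measures, yields the prefactor $2\sqrt{2}/(d_{A}d_{B})$.

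The main obstacle is the sandwich-and-adjoint bookkeeping in the second step: to make $B^{*}$ (rather than $B$) appear in the inner kernel, one must insist on the representation $\bar f(B)=f(B^{*})^{*}$ and carry the adjoint through the outer resolvent, rather than using the direct Herglotz integral of $\bar f$, which would produce $B$ and only the weaker bound $|||X-AXB|||$. A secondary difficulty is the absorption of the phase $e^{i(\alpha-\beta)}$ in the first inequality: this is not a routine triangle estimate but relies on shifting the phase into $(e^{i\alpha}-A)^{-1}$ and exploiting the structure of the double integral against the probability measure $\mu$.
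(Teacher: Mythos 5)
Your overall architecture (Herglotz representation, resolvent sandwich, the $G_1$ bounds $\|(e^{i\alpha}-A)^{-1}\|\le 1/d_A$, and the $2\times2$ block trick for the minus case) is the same as the paper's, but your decision to use \emph{independent} integration variables $\alpha$ and $\beta$ creates a genuine gap in the first inequality. The paper represents $f(A)X+X\bar{f}(B)$ as a \emph{single} integral in one variable: it takes $\bar{f}(B)=f(B)^{*}=\int_0^{2\pi}(e^{-i\alpha}+B^{*})(e^{-i\alpha}-B^{*})^{-1}\,d\mu(\alpha)$, i.e.\ the conjugated Herglotz kernel at the \emph{same} angle $\alpha$ used for $f(A)$, so that the identity
\begin{align*}
(e^{i\alpha}+A)X(e^{-i\alpha}-B^{*})+(e^{i\alpha}-A)X(e^{-i\alpha}+B^{*})=2\left(X-AXB^{*}\right)
\end{align*}
holds exactly (the phases cancel because $e^{i\alpha}e^{-i\alpha}=1$), and one integration against the probability measure $\mu$ yields $\frac{2}{d_Ad_B}|||X-AXB^{*}|||$. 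In your double integral the kernel is $e^{i(\alpha-\beta)}X-AXB^{*}$, and the phase $e^{i(\alpha-\beta)}$ cannot be ``absorbed'': it multiplies only one of the two summands inside the norm, so $|||e^{i(\alpha-\beta)}X-AXB^{*}|||$ is neither equal to nor bounded by $|||X-AXB^{*}|||$. For instance, for $f(z)=\frac{1+z^{2}}{1-z^{2}}$ one has $\mu=\frac12(\delta_0+\delta_\pi)$, and the $(\alpha,\beta)=(0,\pi)$ term forces you to bound $|||X+AXB^{*}|||$ by $|||X-AXB^{*}|||$, which is false in general. Shifting the phase into $(e^{i\alpha}-A)^{-1}$ only produces the resolvent of a rotated operator and destroys the pairing with the kernel $X-AXB^{*}$. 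The minus case does survive your double integral, because the target bound $\sqrt2\,|||\,|AX|+|XB^{*}|\,|||$ for $|||e^{-i\beta}AX-e^{i\alpha}XB^{*}|||$ is uniform in the phases; only the plus case truly requires the single-variable cancellation.

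A secondary but real problem is your adjoint bookkeeping. With your choice $\bar{f}(B)=f(B^{*})^{*}$ one computes $\bar{f}(B)=\int_0^{2\pi}(e^{-i\beta}+B)(e^{-i\beta}-B)^{-1}\,d\mu(\beta)$, whose resolvents involve $B$, not $B^{*}$; carried through the sandwich this puts $B$ (not $B^{*}$) in your kernels, contrary to what you wrote, and would land you on $|||X-AXB|||$. The representation that actually makes $B^{*}$ appear --- and the one the paper uses --- is $\bar{f}(B)=f(B)^{*}$, obtained by conjugating the Herglotz integral of $f$ evaluated at $B$ itself.
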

\begin{proof}
Using the Herglotz representation theorem (see e.g., \cite[p. 21]{do}) we have
\begin{align*}
f(z)=\int_0^{2\pi}{e^{i\alpha}+z\over e^{i\alpha}-z}d\mu(\alpha)+i\textrm{Im}\,f(0)=\int_0^{2\pi}{e^{i\alpha}+z\over e^{i\alpha}-z}d\mu(\alpha),
\end{align*}
where $\mu$ is a positive Borel measure on the interval $[0,2\pi]$ with finite total mass $\int_0^{2\pi}d\mu(\alpha)=f(0)=1$. Hence
\begin{align*}
\bar{f}({z})=\overline{{\int_0^{2\pi}{e^{i\alpha}+{z}\over e^{i\alpha}-{z}}d\mu(\alpha)}}=\int_0^{2\pi}{e^{-i\alpha}+\bar{z}\over e^{-i\alpha}-\bar{z}}d\mu(\alpha),
\end{align*}
where $\bar{f}$ is the conjugate function of $f$ (i.e.,  $\bar{f}f=|f|^2$). So
\begin{align*}
f(A)X+& X\bar{f}(B)\\&=\int_0^{2\pi}\left(e^{i\alpha}+A\right) \left(e^{i\alpha}-A\right)^{-1}X+ X\left(e^{-i\alpha}+B^*\right) \left(e^{-i\alpha}-B^\ast\right)^{-1}d\mu(\alpha)\\&=\int_0^{2\pi}\left(e^{i\alpha}-A\right)^{-1}\Big[ \left(e^{i\alpha}+A\right)X\left(e^{-i\alpha}-B^*\right)
\\&\qquad\quad+\left(e^{i\alpha}-A\right)X\left(e^{-i\alpha}+B^\ast\right)\Big] \left(e^{-i\alpha}-B^\ast\right)^{-1}d\mu(\alpha)\\&=2\int_0^{2\pi}\left(e^{i\alpha}-A\right)^{-1}(X-AXB^\ast)\left(e^{-i\alpha}-B^\ast\right)^{-1}d\mu(\alpha).
\end{align*}
Hence
\begin{align*}
&\left|\left|\left|f(A)X+ X\bar{f}(B)\right|\right|\right|\\&=\left|\left|\left|\int_0^{2\pi}\left(e^{i\alpha}+A\right) \left(e^{i\alpha}-A\right)^{-1}X+ X\left(e^{-i\alpha}+B^\ast\right) \left(e^{-i\alpha}-B^\ast\right)^{-1}d\mu(\alpha)\right|\right|\right|
\\&=2\,\left|\left|\left|\int_0^{2\pi}\left(e^{i\alpha}-A\right)^{-1}(X-AXB^\ast)\left(e^{-i\alpha}-B^\ast\right)^{-1}d\mu(\alpha)\right|\right|\right|
\\&\leq2\int_0^{2\pi}\left|\left|\left|\left(e^{i\alpha}-A\right)^{-1}(X-AXB^\ast)\left(e^{-i\alpha}-B^\ast\right)^{-1}\right|\right|\right|d\mu(\alpha)
\\&\leq2\int_0^{2\pi}\|\left(e^{i\alpha}-A\right)^{-1}\|\|(e^{i\alpha}-B)^{-1}\|\left|\left|\left|X-AXB^\ast\right|\right|\right|d\mu(\alpha)
\\&\qquad\qquad\qquad\qquad\qquad\qquad (\textrm{by inequality}\,\eqref{kho}).
\end{align*}
Since $A$ and $B$  are $G_1$ operators, it follows from $\left\|\left(e^{i\alpha}-A\right)^{-1}\right\|={1\over \textrm{dist}(e^{i\alpha}, \sigma(A))}\leq{1\over \textrm{dist}(\partial \mathbb{D}, \sigma(A))}={1\over d_A}$
and $\left\|\left(e^{i\alpha}-B\right)^{-1}\right\|\leq{1\over d_B}$
that
 \begin{align*}
\left|\left|\left|f(A)X+X\bar{f}(B)\right|\right|\right|&\leq\left({2\over d_Ad_B}\int_0^{2\pi}d\mu(\alpha)\right)\left|\left|\left|X-AXB^\ast\right|\right|\right|
\\&=\left({2\over d_Ad_B}f(0)\right)\left|\left|\left|X-AXB^\ast\right|\right|\right|\\&={2\over d_Ad_B}\left|\left|\left|X-AXB^\ast\right|\right|\right|.
\end{align*}
Then we have the first inequality. Using the inequality
\begin{align*}
|||e^{-i\alpha}AX+e^{i\alpha}XB^\ast|||&=\left|\left|\left|\left[\begin{array}{cc}
 e^{-i\alpha}AX+e^{i\alpha}XB^\ast&0\\
 0&0
 \end{array}\right]\right|\right|\right|\\&
 =\left|\left|\left|\left[\begin{array}{cc}
 e^{-i\alpha}&e^{i\alpha}\\
 0&0
 \end{array}\right]\left[\begin{array}{cc}
 AX&0\\
 XB^\ast&0
 \end{array}\right]\right|\right|\right|\\&\leq
 \left\|\left[\begin{array}{cc}
 e^{-i\alpha}&e^{i\alpha}\\
 0&0
 \end{array}\right]\right\|\left|\left|\left|\left[\begin{array}{cc}
 AX&0\\
 XB^\ast&0
 \end{array}\right]\right|\right|\right|(\textrm{by inequality}\,\eqref{kho})
 \\&=
 \sqrt{2}\,\left|\left|\left|\,\left|\left[\begin{array}{cc}
 AX&0\\
 XB^\ast&0
 \end{array}\right]\right|\,\right|\right|\right|\\&
 =\sqrt{2}\left|\left|\left|(|AX|^2+|XB^\ast|^2)^{1\over2}\oplus0\right|\right|\right|\\&
 \leq \sqrt{2}\left|\left|\left|(|AX|+|XB^\ast|)\oplus0\right|\right|\right|\\&\qquad\qquad(\textrm{applying \cite[p. 775]{ando123} to the function} \,h(t)=t^{1\over2})
 \end{align*}
 the Ky Fan dominance theorem we have
 \begin{align}\label{dadar}
|||e^{-i\beta}AX+e^{i\alpha}XB^\ast||| \leq\sqrt{2} \left|\left|\left|\,|AX|+|XB^\ast|\,\right|\right|\right|.
 \end{align}
 It follows from \eqref{dadar} and the same argument of the proof of the first inequality that we have the second inequality and this completes the proof.
\end{proof}
\begin{remark}
Let $f(x+yi)=u(x,y)+v(x,y)i$, where $u, v$ are real and imaginary parts of $f$, respectively. If $f,\bar{f}\in\mathfrak{A}$, then the Cauchy-Riemann equations for complex analytic functions (i.e., $\frac{\partial u}{\partial x}=\frac{\partial v}{\partial y}$ and $\frac{\partial u}{\partial y}=-\frac{\partial v}{\partial x}$) implies that $v(x,y)=k$ for some $k\in\mathcal{C}$. The condition $f(0)=1$ conclude that $v(x,y)=0$. Hence,  $f$ is a real valued function. So, for arbitrary functions $f, g\in\mathfrak{A}$,  we can not replace $g$ by $\bar{f}$ in inequalities \eqref{mos-kit1} and  \eqref{mos-kit2}.  Thus, in Theorem \ref{man1} we
have been established some upper bounds for $|||f(A)X+X\bar{f}(B)|||$ and $|||f(A)X-X\bar{f}(B)|||$ in terms of $\left|\left|\left|X-AXB^\ast\right|\right|\right|$ and $\left|\left|\left|\,|AX|+|XB^\ast|\,\right|\right|\right|$, respectively,  that can not be derived from inequality \eqref{mos-kit1} and  \eqref{mos-kit2} for an arbitrary function $f\in\mathfrak{A}$.
 \end{remark}
 \begin{remark}
If  $A, B\in{\mathbb B}({\mathscr H})$ are $G_1$ operators with $\sigma (A)\cup\sigma (B)\subset\mathbb{D}$ and $f\in\mathfrak{A}$, then with a similar argument in the proof of Theorem \ref{man1} we get the following inequalities
\begin{align}\label{man222}
\left|\left|\left|\bar{f}(A)X+X{f}(B)\right|\right|\right|\leq{2\over d_Ad_B}\left|\left|\left|X-A^\ast XB\right|\right|\right|
\end{align}
and
\begin{align*}
\left|\left|\left|\bar {f}(A)X-X{f}(B)\right|\right|\right|\leq{2\sqrt{2}\over d_Ad_B}\left|\left|\left|\,|A^\ast X|+|XB|\,\right|\right|\right|,
\end{align*}
where  $X \in{\mathbb B}({\mathscr H})$.
\end{remark}
\begin{remark}
For an arbitrary operator $A\in{\mathbb B}({\mathscr H})$, the numerical range is definition by $W(A)=\{\langle Ax, x\rangle: x\in {\mathscr H},\| x \|=1\}$. It is well-known that $W(A)$ is a bounded convex subset of the complex plane $\mathbb{C}$. Its closure $\overline{W(A)}$ contains $\sigma(A)$ and is contained in $\left\{z \in \mathbb{C} : |z| \leq \|A\|\right\}$. In \cite{hil}, it is shown
\begin{align*}
\frac{1}{\textrm{dist}(z, \sigma(A))}\leq\left\|(z-A)^{-1}\right\|\qquad(z\not\in\sigma(A))
\end{align*}
and
\begin{align*}
\left\Vert(z-A)^{-1}\right\Vert\leq\frac{1}{\textrm{dist}(z,\overline{W(A)})}\qquad(z\not\in \overline{W(A)}).
\end{align*}
Now, if we replace the hypophysis $G_1$ operators by the conditions $\overline{W(A)}\cup \overline{W(B)}\subseteq \mathbb{D}$ in Theorem \ref{man1}, then  in inequalities \eqref{mos-kit1}-\eqref{kit444}, the constants $d_A$ and $d_B$ interchange to $D_A$ and $D_B$, respectively,  where
$D_A={ \textrm{dist}(\partial \mathbb{D}, \overline{W(A)})}$, $D_B={ \textrm{dist}(\partial \mathbb{D}, \overline{W(A)})}$. Also inequalities \eqref{main2} and \eqref{main444} appear of the forms
\begin{align*}
\left|\left|\left|f(A)X+X\bar{f}(B)\right|\right|\right|\leq{2\over D_AD_B}\left|\left|\left|X-AXB^*\right|\right|\right|
\end{align*}
and
\begin{align*}
\left|\left|\left|f(A)X-X\bar{f}(B)\right|\right|\right|\leq{2\sqrt{2}\over D_AD_B}\left|\left|\left|\,|AX|+|XB^\ast|\,\right|\right|\right|.
\end{align*}
where $f\in\mathfrak{A}$. For example, for every contraction operator $A$ (i.e., $A^*A\leq I$) and $0< \epsilon <1$, the operator $\epsilon A$ has the property $\overline{W(\epsilon A)}\subseteq \mathbb{D}$.
\end{remark}
If we take $X=I$ in Theorem \ref{man1}, then we get the following result.
\begin{corollary}
Let $A, B\in{\mathbb B}({\mathscr H})$ be normal operators with $\sigma (A)\cup\sigma (B)\subset\mathbb{D}$ and $f\in\mathfrak{A}$. Then for every $X \in{\mathbb B}({\mathscr H})$
\begin{align*}
\left|\left|\left|f(A)+\bar{f}(B)\right|\right|\right|\leq{2\over d_Ad_B}\left|\left|\left|I-AB^\ast\right|\right|\right|.
\end{align*}
In particular, for $B=A$ we have
\begin{align*}
\left|\left|\left|\textrm{Re}(f(A))\right|\right|\right|\leq{1\over d^2_A}\left|\left|\left|I-AA^\ast\right|\right|\right|.
\end{align*}
\end{corollary}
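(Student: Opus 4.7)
The plan is to specialize Theorem \ref{man1} to the case $X=I$. Since every normal operator is a $G_1$ operator, the hypotheses of that theorem are satisfied by $A$ and $B$. Substituting $X=I$ into inequality \eqref{main2} collapses the expression $X-AXB^{\ast}$ to $I-AB^{\ast}$ and the left-hand side to $f(A)+\bar{f}(B)$, which immediately yields the first assertion
$$\left|\left|\left|f(A)+\bar{f}(B)\right|\right|\right|\leq\frac{2}{d_Ad_B}\left|\left|\left|I-AB^{\ast}\right|\right|\right|.$$

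For the particular case $B=A$, the key identity is $\bar{f}(A)=f(A)^{\ast}$, which holds precisely because $A$ is normal. This can be read off from the Herglotz integral used in the proof of Theorem \ref{man1}: since
$$\bar{f}(A)=\int_{0}^{2\pi}(e^{-i\alpha}+A^{\ast})(e^{-i\alpha}-A^{\ast})^{-1}d\mu(\alpha),$$
and normality of $A$ lets the factors $(e^{i\alpha}+A)$ and $(e^{i\alpha}-A)^{-1}$ commute, the integrand coincides with $\bigl[(e^{i\alpha}+A)(e^{i\alpha}-A)^{-1}\bigr]^{\ast}$; taking the adjoint outside the integral gives $\bar{f}(A)=f(A)^{\ast}$. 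Hence $f(A)+\bar{f}(A)=f(A)+f(A)^{\ast}=2\,\textrm{Re}(f(A))$, and $d_B=d_A$.

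Putting $B=A$ in the first inequality and dividing by $2$ therefore produces
$$\left|\left|\left|\textrm{Re}(f(A))\right|\right|\right|\leq\frac{1}{d_A^{2}}\left|\left|\left|I-AA^{\ast}\right|\right|\right|,$$
which is the stated particular case. The only non-cosmetic step is the identity $\bar{f}(A)=f(A)^{\ast}$; this is the single place where normality (as opposed to the weaker $G_1$ property) is genuinely used, and it is the natural candidate for the main obstacle, though it is really just a direct consequence of commutativity of $A$ with $A^{\ast}$.
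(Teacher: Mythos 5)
Your proof is correct and matches the paper's route exactly: the paper derives this corollary simply by putting $X=I$ in Theorem \ref{man1}, with normality serving only to guarantee that $A$ and $B$ are $G_1$ operators, and the case $B=A$ following from $f(A)+\bar{f}(A)=2\,\textrm{Re}(f(A))$. One small quibble: the identity $\bar{f}(A)=f(A)^{\ast}$ does not actually need normality, since $(e^{-i\alpha}+A^{\ast})$ and $(e^{-i\alpha}-A^{\ast})^{-1}$ commute for \emph{any} operator $A$ (both being functions of $A^{\ast}$), so the only genuine use of normality is the $G_1$ property.
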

For the next result we need the following lemma (see also \cite{yam=kit}).
\begin{lemma}\label{mo=kit}
If $A, B, X\in{\mathbb B}({\mathscr H})$ such that $A$ and $B$ are self-adjoint and $0<mI\leq X$ for some positive real number $m$, then
\begin{align*}
m\left|\left|\left|A-B\right|\right|\right|\leq\left|\left|\left|AX+XB\right|\right|\right|.
\end{align*}
\end{lemma}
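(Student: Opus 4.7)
The plan is to reduce the lemma to a Heinz--Bhatia--Davis type anti-commutator bound, exploiting the positivity hypothesis $X \geq mI > 0$.

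\textbf{Step 1 (Normalization).} By the homogeneity of any unitarily invariant norm, I would first replace $X$ by $m^{-1}X$, which still satisfies $m^{-1}X\geq I$. The claim then reduces to $\left|\left|\left|A-B\right|\right|\right|\leq \left|\left|\left|AX+XB\right|\right|\right|$ under the standing assumption $X\geq I$.

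\textbf{Step 2 (Decomposition).} Writing $X=I+Y$ with $Y\geq 0$, I would expand the right-hand side as $AX+XB=(A+B)+(AY+YB)$, and further split the anti-commutator piece by means of the identity
\[
AY+YB=\tfrac{1}{2}\bigl((A+B)Y+Y(A+B)\bigr)+\tfrac{1}{2}\bigl((A-B)Y-Y(A-B)\bigr),
\]
which isolates a symmetric term built from $A+B$ from a commutator correction involving $A-B$.

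\textbf{Step 3 (Heinz-type lower bound).} The main tool is the Bhatia--Davis/Heinz anti-commutator inequality: for self-adjoint $S$ and positive $P$,
\[
\left|\left|\left|PS+SP\right|\right|\right|\geq 2\left|\left|\left|P^{1/2}SP^{1/2}\right|\right|\right|.
\]
Applying this with $P=X$ and $S=A-B$, and combining with the operator monotonicity $X^{1/2}\geq I$ that follows from $X\geq I$, one extracts a lower bound of the form $\left|\left|\left|X^{1/2}(A-B)X^{1/2}\right|\right|\right|\geq \left|\left|\left|A-B\right|\right|\right|$, which chains with the identity of Step~2 to yield the stated inequality. The off-diagonal $AY$ and $YB$ pieces can be absorbed by a block-matrix manipulation analogous to the one used in the proof of Lemma~\ref{man} (embedding the relevant operators into a $2\times 2$ block and using $|||\,\cdot\,||| = |||\,|\cdot|\,|||$ together with inequality \eqref{kho}).

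The main obstacle is bookkeeping the signs: the statement pairs the norm of the \emph{sum} $AX+XB$ with the norm of the \emph{difference} $A-B$, so one must combine the triangle and reverse-triangle inequalities carefully when merging the two pieces of Step~2. Kittaneh's classical inequality $\left|\left|\left|P-Q\right|\right|\right|\leq \left|\left|\left|P+Q\right|\right|\right|$ for positive $P,Q$ furnishes the crucial bridge between $A-B$ and $A+B$ once $A$ and $B$ are decomposed into their positive and negative parts $A=A_+-A_-$, $B=B_+-B_-$; applying the Heinz bound separately to each of the four resulting positive pairings and recombining gives the announced inequality.
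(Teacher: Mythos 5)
The target inequality, as stated, is false for merely self-adjoint $A$ and $B$, so no proof strategy can succeed. Already in $\mathbb{M}_2$, take $A=I$, $B=-I$, $X=I$ and $m=1$: then $m\left|\left|\left|A-B\right|\right|\right|=\left|\left|\left|2I\right|\right|\right|>0$ while $\left|\left|\left|AX+XB\right|\right|\right|=\left|\left|\left|A+B\right|\right|\right|=0$. What the paper actually proves is the inequality with a \emph{minus} sign on the right-hand side, $m\left|\left|\left|A-B\right|\right|\right|\leq\left|\left|\left|AX-XB\right|\right|\right|$: starting from the van Hemmen--Ando bound $m\left|\left|\left|A-B\right|\right|\right|\leq\frac12\left|\left|\left|(A-B)X+X(A-B)\right|\right|\right|$, it uses the identity $(A-B)X+X(A-B)=(AX-XB)+(XA-BX)$, the triangle inequality, and $\left|\left|\left|XA-BX\right|\right|\right|=\left|\left|\left|(XA-BX)^{*}\right|\right|\right|=\left|\left|\left|AX-XB\right|\right|\right|$. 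The ``$+$'' in the displayed conclusion is a sign slip (the lemma is subsequently applied to the positive operators $\textrm{Re}(f(A))$ and $\textrm{Re}(f(B))$, where positivity changes the situation, but that hypothesis is not in the statement).

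Your Step 3 is fine and is in fact an independent proof of the one ingredient the paper cites as a black box: Bhatia--Davis gives $\left|\left|\left|(A-B)X+X(A-B)\right|\right|\right|\geq2\left|\left|\left|X^{1/2}(A-B)X^{1/2}\right|\right|\right|\geq2m\left|\left|\left|A-B\right|\right|\right|$, since $\|(X/m)^{-1/2}\|\leq1$. The gap is everything after that: to finish you need a \emph{lower} bound for $\left|\left|\left|AX+XB\right|\right|\right|$ in terms of the pieces produced by your Step 2 decomposition, but the triangle inequality only gives upper bounds, and the proposed recombination of Kittaneh's $\left|\left|\left|P-Q\right|\right|\right|\leq\left|\left|\left|P+Q\right|\right|\right|$ over the four positive-part pairings $A_{\pm}$, $B_{\pm}$ has no valid mechanism for reassembling those four estimates into a single estimate for $AX+XB$ --- the counterexample above shows that any such mechanism must fail. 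If you replace the target by $\left|\left|\left|AX-XB\right|\right|\right|$ (or add the hypothesis $A,B\geq0$), your Step 3 combined with the paper's one-line identity closes the argument, and the $X=I+Y$ decomposition of Step 2 is not needed.
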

\begin{proof}
\begin{align*}
m\left|\left|\left|A-B\right|\right|\right|&\leq\frac{1}{2}\left|\left|\left|(A-B)X+X(A-B)\right|\right|\right|\qquad(\textrm {by \cite[Lemma 3.1]{van}})\\&
=\frac{1}{2}\left|\left|\left|AX-XB+(XA-BX)\right|\right|\right|\\&\leq
\frac{1}{2}\left(\left|\left|\left|AX-XB\right|\right|\right|+\left|\left|\left|XA-BX\right|\right|\right|\right)\\&
=\left|\left|\left|AX-XB\right|\right|\right|\qquad(\textrm {since\,} \|A\|=\|A^*\|).
\end{align*}
\end{proof}
\begin{proposition}
Let $A, B\in{\mathbb B}({\mathscr H})$  be $G_1$ operators with $\sigma (A)\cup\sigma (B)\subset\mathbb{D}$, let $X\in{\mathbb B}({\mathscr H})$ such that $0<mI\leq X$ for some positive real number $m$ and $f\in\mathfrak{A}$. Then
\begin{align}\label{hashemi}
m\left|\left|\left|\textrm{Re}(f(A))-\textrm{Re}(f(B))\right|\right|\right|\leq\frac{1}{d_Ad_B}\left(\left|\left|\left|X-AXB^\ast\right|\right|\right|+\left|\left|\left|X-A^\ast XB\right|\right|\right|\right),
\end{align}
In particular, if $A$ and $B$ are unitary operators, then
\begin{align*}
m\left|\left|\left|\textrm{Re}(f(A))-\textrm{Re}(f(B))\right|\right|\right|\leq\frac{2}{d_Ad_B}\left|\left|\left|X-AXB^\ast\right|\right|\right|
\end{align*}
\end{proposition}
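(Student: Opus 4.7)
The plan is to follow the pattern of the preceding remark by combining Lemma~\ref{mo=kit} with inequality~\eqref{main2} and its ``swapped'' analogue~\eqref{man222}. First I record the algebraic identity $\bar{f}(A)=f(A)^{\ast}$, which is implicit in the proof of Theorem~\ref{man1}: in the paper's convention $\bar{f}(B)$ is defined through $B^{\ast}$ in the denominators of the Herglotz integrand, so taking the adjoint inside the integral gives exactly $f(A)^{\ast}$. Consequently
\[
\textrm{Re}(f(A)) = \tfrac{1}{2}\bigl(f(A)+\bar{f}(A)\bigr)
\]
is self-adjoint, and likewise for $B$. Moreover $\bar{f}\in\mathfrak{A}$ since $\textrm{Re}(\bar{f})=\textrm{Re}(f)>0$ and $\bar{f}(0)=\overline{f(0)}=1$, so \eqref{main2} and \eqref{man222} may be invoked with the pair $(f,\bar{f})$ in either order.

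Using $mI\le X$, Lemma~\ref{mo=kit} applied to the self-adjoint operators $\textrm{Re}(f(A))$ and $\textrm{Re}(f(B))$ gives
\[
m\,\bigl|\bigl|\bigl|\textrm{Re}(f(A))-\textrm{Re}(f(B))\bigr|\bigr|\bigr|\le \bigl|\bigl|\bigl|\textrm{Re}(f(A))X+X\,\textrm{Re}(f(B))\bigr|\bigr|\bigr|.
\]
Doubling and expanding the right-hand side produces
\[
(f(A)+\bar{f}(A))X+X(f(B)+\bar{f}(B))=\bigl[f(A)X+X\bar{f}(B)\bigr]+\bigl[\bar{f}(A)X+Xf(B)\bigr].
\]
The triangle inequality now splits the norm into two summands. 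I bound the first by \eqref{main2} (with the pair $(f,\bar{f})$), which yields $\frac{2}{d_Ad_B}|||X-AXB^{\ast}|||$, and the second by \eqref{man222} (which is exactly \eqref{main2} with the roles of $f$ and $\bar{f}$ interchanged), which yields $\frac{2}{d_Ad_B}|||X-A^{\ast}XB|||$. Dividing by $2$ then gives \eqref{hashemi}.

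For the unitary specialization I use the algebraic identity
\[
A^{\ast}(X-AXB^{\ast})B=A^{\ast}XB-X=-(X-A^{\ast}XB),
\]
obtained from $A^{\ast}A=B^{\ast}B=I$. Taking any unitarily invariant norm and invoking \eqref{kho} with $\|A^{\ast}\|=\|B\|=1$ gives $|||X-A^{\ast}XB|||\le|||X-AXB^{\ast}|||$, while the symmetric identity $A(X-A^{\ast}XB)B^{\ast}=-(X-AXB^{\ast})$ yields the reverse bound. Hence the two terms coincide and the right-hand side of \eqref{hashemi} collapses to $\frac{2}{d_Ad_B}|||X-AXB^{\ast}|||$. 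The only delicate point I anticipate is making sure that Lemma~\ref{mo=kit} is really being used in the ``plus'' form $TX+XS$ as in its statement (so that the expression to be bounded matches the combinations controlled by \eqref{main2} and \eqref{man222}), rather than the ``minus'' form that happens to appear at the very end of its proof; once this is reconciled, the remainder of the argument is routine triangle-inequality bookkeeping.
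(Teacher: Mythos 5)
Your argument is essentially identical to the paper's: apply Lemma~\ref{mo=kit} to the self-adjoint operators $\textrm{Re}(f(A))$ and $\textrm{Re}(f(B))$, halve and expand, split by the triangle inequality, bound the two summands by \eqref{main2} and \eqref{man222}, and for unitary $A,B$ use the same invariance identity $\left|\left|\left|X-AXB^\ast\right|\right|\right|=\left|\left|\left|X-A^\ast XB\right|\right|\right|$. The delicate point you flag at the end is genuine but is inherited from the paper itself: the proof of Lemma~\ref{mo=kit} actually establishes the ``minus'' form $m\left|\left|\left|A-B\right|\right|\right|\leq\left|\left|\left|AX-XB\right|\right|\right|$, while its statement --- and the paper's proof of this proposition, exactly like yours --- invokes the ``plus'' form, so your reproduction matches the paper's reasoning step for step, unresolved wrinkle included.
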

\begin{proof}
\begin{align*}
m\left|\left|\left|\textrm{Re}(f(A))-\textrm{Re}(f(B))\right|\right|\right|&\leq
\left|\left|\left|\textrm{Re}(f(A))X+X\textrm{Re}(f(B))\right|\right|\right|\\&
\qquad\qquad\qquad\qquad\qquad\qquad\qquad\qquad(\textrm {by Lemma \ref{mo=kit}})\\&=
\frac{1}{2}\left|\left|\left|f(A)X+X\bar{f}(B)+\bar{f}(A)X+Xf(B)\right|\right|\right|\\&\leq
\frac{1}{2}\left(\left|\left|\left|f(A)X+X\bar{f}(B)\right|\right|\right|+\left|\left|\left|\bar{f}(A)X+Xf(B)\right|\right|\right|\right)\\&\leq
\frac{1}{d_Ad_B}\left(\left|\left|\left|X-AXB^\ast\right|\right|\right|+\left|\left|\left|X-A^\ast XB\right|\right|\right|\right)\\&
\qquad\qquad\qquad\qquad\qquad(\textrm {by  inequalities \eqref{main2} and \eqref{man222}}).
\end{align*}
Hence we get the first inequality. Especially, it follows from inequality \eqref{hashemi} and equation%
 \begin{align*}
\left|\left|\left|X-AXB^\ast\right|\right|\right|=\left|\left|\left|A(A^\ast XB-X)B^\ast\right|\right|\right|=\left|\left|\left|A^\ast XB-X\right|\right|\right|=\left|\left|\left|X-A^\ast XB\right|\right|\right|.
\end{align*}%
\end{proof}

\begin{remark}
Using Lemma \ref{man} we have
\begin{align*}
\big|\big|\big|((f(A)+\bar{f}(B))X&-Y(f(B)+\bar{f}(A)))\oplus0\big|\big|\big|\\&
\leq 2\|f(A)+\bar{f}(B)\|^{1\over2}\|f(B)+\bar{f}(A)\|^{1\over2}|||X\oplus Y|||\\&
=2\|f(A)+\bar{f}(B)\||||X\oplus Y|||.
\end{align*}
Now, If we apply  inequality \eqref{main2}, then we reach
\begin{align*}
\|f(A)+\bar{f}(B)\||||X\oplus Y|||
\leq{2\over d_Ad_B}\|I-AB^\ast\|\left|\left|\left|X\oplus Y\right|\right|\right|,
\end{align*}
whence
\begin{align*}
\left|\left|\left|\big((f(A)+\bar{f}(B))X-Y(f(B)+\bar{f}(A))\big)\oplus0\right|\right|\right|
\leq \,{{4\over d_Ad_B}\|I-AB^\ast\|}\left|\left|\left|X\oplus Y\right|\right|\right|.
\end{align*}
Hence, if we put $B=A$, then we get
\begin{align*}
\left|\left|\left|\textrm{Re}(f(A))X- Y\textrm{Re}(f(A))\oplus0\right|\right|\right|\leq {2\over d^2_A}\,\|I-AA^\ast\|\|\left|\left|\left|X\oplus Y\right|\right|\right|.
\end{align*}
\end{remark}

\bigskip
\bibliographystyle{amsplain}

\begin{thebibliography}{99}

\bibitem{ando123} T. Ando and X. Zhan, \textit{Norm inequalities related to operator monotone functions}, Math.
Ann. \textbf{315} (1999), no. 4, 771--780.

\bibitem{asad} R. Alizadeh and M.B. Asadi, \textit{An extension of Ky Fan's dominance theorem}, Banach J. Math. Anal. \textbf{6} (2012), no. 1, 139--146

\bibitem{bakh1} M. Bakherad, M. Krnic and M.S. Moslehian, \textit{Reverses of the Young inequality for matrices and operators}, Rocky Mountain J. Math. \textbf{46} (2016), no. 4, 1089-1105.

 \bibitem{bakhmos} M. Bakherad and M.S. Moslehian, \textit{Reverses and variations of Heinz inequality}, Linear Multilinear Algebra \textbf{63} (2015), no. 10, 1972--1980.
     
\bibitem{bakhkit} M. Bakherad and F. Kittaneh, \textit{ Numerical Radius Inequalities Involving Commutators
of $G_1$ Operators}, Complex Anal. Oper. Theory (to appear) 
DOI 10.1007/s11785-017-0726-9.

\bibitem{bha} R. Bhatia, \textit{Matrix Analysis}, Springer, New York, 1997.

\bibitem{BL} L. Bao, Y. Lin and Y. Wei, \textit{Krylov subspace methods for the generalized Sylvester equation},
Appl. Math. Comput. \textbf{175} (2006), no. 1, 557–-573.

\bibitem{do} W.F. Donoghue, \textit{Monotone Matrix Functions and Analytic Continuation}. Springer, New York (1974).

\bibitem{du} N. Dunford and J. Schwartz, \textit{Linear Operators I}, Interscience, New York, 1958.

\bibitem{1010}J.I. Fujii, M. Fujii, T. Furuta and M. Nakamoto, \textit{Norm inequalities equivalent to Heinz inequality},
Proc. Amer. Math. Soc. \textbf{118} (1993), 827--830.

\bibitem{hil} S. Hildebrandt, \textit{$\ddot{U}$ber den numerischen Wertebereich eines operators}, Math. Ann. \textbf{163} (1966) 230--247.

 \bibitem{hirz} O. Hirzallah and F. Kittaneh, \textit{Singular values, norms, and commutators}, Linear Algebra
Appl. \textbf{432} (2010), no. 5, 1322--1336.

\bibitem{kit} F. Kittaneh, \textit{On some operator inequalities}, Linear Algebra Appl. \textbf{208/209} (1994), 19–28.

\bibitem{yam=kit} F. Kittaneh, M.S. Moslehian and T. Yamazaki, \textit{Cartesian decomposition and numerical radius inequalities}, Linear Algebra Appl. \textbf{471} (2015), 46--53.

\bibitem{kit2} F. Kittaneh, \textit{Norm inequalities for commutators of $G\sb 1$ operators}. Complex Anal. Oper. Theory \textbf{10} (2016), no. 1, 109--114.

\bibitem{mosl} F. Kittaneh, M.S. Moslehian and M. Sababheh, \textit{Unitarily invariant norm inequalities for elementary operators involving $G_1$ operators}, Linear Algebra Appl. \textbf{513} (2017) 84--95.

\bibitem{put}C.R. Putnam, \textit{Operators satisfying a $G_1$ condition}, Pacific J. Math. \textbf{84} (1979), 413--426.

\bibitem{sed} A. Seddik, \textit{Rank one operators and norm of elementary operators}, Linear Algebra Appl. \textbf{424} (2007), no. 1, 177--183.

\bibitem{van} J.L. van Hemmen and T. Ando, \textit{An inequality for trace ideals}, Comm. Math. Phys. \textbf{76}(2) (1980) 143--148.



\end{thebibliography}

\end{document}